\theoremstyle{plain}
\newtheorem{thm}{Theorem}
\newtheorem{lem}[thm]{Lemma}
\numberwithin{equation}{section}
\newtheorem{cor}[thm]{Corollary}
\theoremstyle{remark}
\theoremstyle{definition}
\newtheorem{defi}[thm]{Definition}
\newtheorem{rek}[thm]{Remark}
\newcommand{\RM}{\mathrm}
\newcommand{\bdisks}{\mathscr{D}\RM{isk}_n^{\partial}}
\newcommand{\BB}{\mathbb}
  \tikzset{commutative diagrams/.cd,
mysymbol/.style={start anchor=center,end anchor=center,draw=none}
}
\def\@xfootnote[#1]{%
  \protected@xdef\@thefnmark{#1}%
  \@footnotemark\@footnotetext}
\title{Stable splitting of mapping spaces via nonabelian Poincar{\'e} duality}
\author{Lauren Bandklayder }
\def\l@subsection{\@tocline{2}{0pt}{2.5pc}{5pc}{}}
\def\l@subsubsection{\@tocline{2}{0pt}{4.5pc}{5pc}{}}
\begin{document}

\newpage
\maketitle

\begin{abstract}
We use nonabelian Poincar{\'e} duality to recover the classical stable splitting of compactly supported mapping spaces, $\RM{Map_c}(M,\Sigma^nX)$, where $M$ is a parallelizable $n$-manifold. Our method for deriving this splitting is new, and naturally extends to give a more general stable splitting of the space of compactly supported sections of a certain bundle on $M$ with fibers $\Sigma^nX$, twisted by the tangent bundle of $M$. This generalization incorporates possible $O(n)$-actions on $X$ as well as accommodating non-parallelizable manifolds. \end{abstract}

\tableofcontents 

\section{Introduction}
Both mapping spaces and configuration spaces play central roles in algebraic topology and, surprisingly, there is a beautiful connection between these two objects. Considering the space of all maps between a manifold and some arbitrary topological space is a daunting task. These mapping spaces are usually infinite dimensional and a priori may seem completely intractable. When specializing to the case where the target is suitably connective, the situation simplifies a little. In fact, these mapping spaces admit a filtration which is not obvious at first glance, and often after sufficient suspension, this filtration splits.  Consequently, one can study these spaces stably, and this allows us to understand this huge topological space in terms of tangible, well-behaved geometric pieces -- finite labelled configuration spaces. 

This connection has been studied extensively in the literature.  More formally, for $M$ a framed $n$-manifold and $X$ a connected topological space, there is a stable splitting of mapping spaces 
\begin{equation} \label{the splitting}
\Sigma^{\infty} \mathrm{Map_c}(M, \Sigma^nX) \simeq \underset{i\geq 1}{\bigvee} \Sigma^{\infty} \mathrm{Conf}_i(M, \partial M) \underset{\Sigma_i}{\wedge} X^{\wedge i},
\end{equation}
where $\RM{Map_c}(M,\Sigma^nX)$ is the space of compactly-supported maps from $M$ to $\Sigma^nX$ and $\RM{Conf}_i(M, \partial M)$ denotes the quotient of the configuration space of $i$ distinct points in $M$ by the subspace of configurations which contain at least one point in the boundary of $M$. Special cases of this result date back as far as the early seventies, including Milnor's splitting of $\Sigma \Omega \Sigma X$ \cite{Milnor}, Snaith's splitting of $\Omega^n \Sigma^nX$ \cite{Sn} which has since been revisited, for example, in \cite{Coh80, CMT, LMS, Vogt, Ksn1, Ksn2} and discussed equivariantly in \cite{Tillman}, and Goodwillie's splitting of the free loop space, $\Lambda \Sigma X$, proven in \cite{Coh87}. In 1975, McDuff used the scanning map to model $\mathrm{Map}_c(M,\Sigma^nX)$ by a certain labelled configuration space that comes with a filtration which can be observed to split \cite{McDuff}. Such splittings have more recently been revisited with techniques from Goodwillie calculus, for example in \cite{Arone, Kmaps1}. This is certainly not a complete history of this result, and for a beautiful exposition consolidating more of the history, references, proofs and examples of this splitting, the reader is encouraged to see the 1987 survey of B{\"o}digheimer \cite{Bod}.

Most earlier proofs of such a splitting proceed in the manner of McDuff: via an identification of $\mathrm{Map}_c(M,\Sigma^nX)$ with some particular model of $X$-labelled configurations in $M$ which \emph{only holds} when one is considering maps into an $n$-fold suspension. In our proof, we will begin, instead, with nonabelian Poincar{\'e} duality, a more general result which allows us to model maps into \emph{any} $n$-connective topological space. Such a model is constructed using the theory of factorization homology, also called topological chiral homology, as developed in \cite{AF, AFT, Lurie, Sal}. Nonabelian Poincar{\'e} duality, as articulated in \cite{Lurie, Sal, SegalPD} and generalized in \cite{AF, AFT}, tells us that for a parallelizable manifold, $M$, and $n$-connective topological space, $Z$, there is a homotopy equivalence $$\int_M \Omega^nZ \simeq \mathrm{Map_c}(M,Z)$$ between factorization homology with coefficients in the $n$-fold loop space $\Omega^nZ$ and the space of compactly supported maps from $M$ to $Z$. For more on nonabelian Poincar{\'e} duality, the interested reader may also see \cite{Klang, Miller}.

There is a further generalization of nonabelian Poincar{\'e} duality, which, in particular, accommodates non-parallelizable manifolds by replacing the mapping space, $\RM{Map_c}(M,Z)$, with an appropriate section space, see Theorem \ref{nonabe PD}. Using this more general statement in the case where the target is an $n$-fold suspension, $\Sigma^nX$, we will obtain the following generalization of the splitting (\ref{the splitting}):
\begin{thm} \label{main} For any $O(n)$-space, X, there is a stable splitting $$\Sigma^{\infty} \Gamma_{\mathrm{c}}(E) \simeq \underset{i\geq 1}{\bigvee} \Sigma^{\infty} \mathrm{Conf}^{\RM{fr}}_i(M, \partial M) \underset{\Sigma_i \ltimes O(n)^i}{\wedge} X^{\wedge i}.$$
When $M$ is parallelizable and $X$ given the trivial $O(n)$-action, this reduces to the classical splitting 
$$\Sigma^{\infty} \mathrm{Map_c}(M, \Sigma^nX) \simeq \underset{i\geq 1}{\bigvee} \Sigma^{\infty} \mathrm{Conf}_i(M, \partial M) \underset{\Sigma_i}{\wedge} X^{\wedge i}.$$ 
\end{thm}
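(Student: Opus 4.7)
The plan is to apply the generalized nonabelian Poincaré duality of Theorem \ref{nonabe PD} to reduce $\Gamma_c(E)$ to a factorization homology computation, and then exploit the fact that factorization homology of a free (framed) $E_n$-algebra admits an explicit configuration-space model.

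First, I would apply the twisted form of nonabelian Poincaré duality to the bundle $E$, whose fiber is the $O(n)$-space $\Sigma^nX$. Since $\Sigma^nX$ is $n$-connective, this yields an unstable equivalence
\[
\Gamma_c(E)\;\simeq\;\int_M \Omega^n\Sigma^nX,
\]
where the right-hand side is factorization homology with coefficients in the $O(n)$-equivariant (framed) $E_n$-algebra $\Omega^n\Sigma^nX$, taken with the tangential twisting appropriate to the possibly non-parallelizable $M$.

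Next, I would replace $\Omega^n\Sigma^nX$ by the free framed $E_n$-algebra on $X$. Writing
\[
F_n^{O(n)}(X)\;=\;\bigsqcup_{i\geq 0} \mathrm{Conf}^{\mathrm{fr}}_i(\mathbb{R}^n)\underset{\Sigma_i\ltimes O(n)^i}{\times} X^{\times i}
\]
for this free algebra, there is a canonical unit map $F_n^{O(n)}(X)\to \Omega^n\Sigma^nX$ of $O(n)$-equivariant $E_n$-algebras, which is the equivariant version of the Snaith/May comparison and is known to become a stable equivalence of based spaces. Factorization homology of the free algebra is computed directly (cf.\ Ayala--Francis, Lurie) as
\[
\int_M F_n^{O(n)}(X)\;\simeq\;\bigsqcup_{i\geq 0}\mathrm{Conf}^{\mathrm{fr}}_i(M)\underset{\Sigma_i\ltimes O(n)^i}{\times} X^{\times i},
\]
where the compact-support condition on $\Gamma_c(E)$, together with the pointing of the algebras, collapses configurations meeting $\partial M$ to the basepoint and hence produces the pair quotient $\mathrm{Conf}^{\mathrm{fr}}_i(M,\partial M)$. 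Applying $\Sigma^\infty$ then converts the disjoint union into a wedge and the cartesian products into smash products, delivering the stated splitting. The classical splitting is recovered by specializing to $M$ parallelizable with trivial $O(n)$-action, which collapses the framed configuration spaces to the ordinary ones and the wreath product $\Sigma_i\ltimes O(n)^i$ to $\Sigma_i$.

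The main obstacle will be establishing that the Snaith/May unit map $F_n^{O(n)}(X)\to\Omega^n\Sigma^nX$ induces a stable equivalence after passing to factorization homology, in the presence of both the $O(n)$-equivariance on $X$ and the tangential twisting on $M$. Conceptually, this amounts to interchanging $\Sigma^\infty$ with the colimit of tensor powers defining $\int_M(-)$ and checking that the coefficient-level stable splitting propagates through; the technical care required to track the framed structure, the semidirect action $\Sigma_i\ltimes O(n)^i$, and the boundary behavior consistently through this interchange is where the argument concentrates.
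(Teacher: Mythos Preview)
Your overall strategy matches the paper's: nonabelian Poincar\'e duality, replace $\Omega^n\Sigma^nX$ by a free $n$-disk algebra, then compute factorization homology of that free algebra. However, your emphasis is misplaced in two spots, and what you flag as the ``main obstacle'' is not where the actual work lies.

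First, for connected $X$ (which the paper assumes in its proof), May's approximation theorem gives that the unit map $\mathbb{F}_n^{\mathrm{aug}}(X)\to\Omega^n\Sigma^nX$ is an \emph{unstable} equivalence of $n$-disk algebras (Lemma~\ref{Maylemma}), not merely a stable one. Hence it immediately induces an equivalence on factorization homology, and there is nothing to ``propagate.'' Your worry about interchanging $\Sigma^\infty$ with the defining colimit and tracking a merely-stable coefficient-level equivalence through the twisting is unnecessary.

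Second, the step you pass over in one clause --- ``applying $\Sigma^\infty$ converts \ldots\ the cartesian products into smash products'' --- is precisely the Snaith splitting, and it is the genuine content. Applying $\Sigma^\infty_+$ to $\mathrm{Conf}^{\mathrm{fr}}_i(M)\times_{\Sigma_i\ltimes O(n)^i}X^{\times i}$ does not literally produce $\Sigma^\infty\mathrm{Conf}^{\mathrm{fr}}_i(M,\partial M)\wedge_{\Sigma_i\ltimes O(n)^i}X^{\wedge i}$. The paper handles this cleanly by reversing the order of operations: commute $\Sigma^\infty_+$ past $\int_M$ (it is a symmetric monoidal left adjoint), then invoke the equivalence of augmented $n$-disk algebras $\Sigma^\infty_+\mathbb{F}_n^{\mathrm{aug}}(X)\simeq\mathbb{F}_n^{\mathrm{aug}}(\Sigma^\infty X)$ in spectra (Corollary~\ref{Snaithcor}), and only then compute $\int_{M_*}\mathbb{F}_n^{\mathrm{aug}}(\Sigma^\infty X)$, where the smash products and the boundary quotient appear natively from the hypercover formula in the zero-pointed setting.
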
 \noindent Roughly, $E\rightarrow M$ is certain bundle with fibers $\Sigma^n X$, twisted by the tangent bundle of $M$, and $\mathrm{Conf}^{\RM{fr}}_i(M, \partial M)$ can be thought of as a frame bundle on $\RM{Conf}_i(M, \partial M)$; see Theorem~\ref{nonabe PD} and Definitions \ref{rel conf} and \ref{framebdl} for a more rigorous discussion. We remark that this generalization is a special case of the generalization obtained in Theorem 4.8 of \cite{Tillman}, but our proof is quite different-- Manthorpe and Tilllman begin by replacing configuration spaces with their spaces of tubular neighborhoods, after which they appeal to scanning map techniques akin to those of McDuff, whereas we avoid such methods entirely.

The outline of our derivation of Theorem \ref{main} is as follows. We will appeal to nonabelian Poincar{\'e} duality to identify the space $\mathrm{\Gamma_c}(E)$ with the factorization homology of $M$ with coefficients in $\Omega^n \Sigma^nX$. Passing to spectra and using a variant of May's Approximation Theorem \cite{May}, we are able to invoke the Snaith splitting to reduce the proof of Theorem~\ref{main} to the computation of the factorization homology of the free $n$-disk algebra in spectra on $\Sigma^{\infty} X$. Finally, we will observe that this factorization homology splits, as exhibited by a straight-forward hypercover argument, given in full detail, for example, in [AF15a -- 5.5] and [AF14 -- 2.4.1]. 

\subsection*{Acknowledgements} I would like to thank John Francis for suggesting this project and for his exceptional guidance throughout its completion. I am also indebted to Ben Knudsen for very many helpful conversations and constructive suggestions on this work. Finally, this paper was written while partially supported by an NSF Graduate Research Fellowship, and I am very grateful for their support. 
\section{Factorization homology and nonabelian Poincar{\'e} duality} \label{prelims}

Factorization homology acts as a bridge between the algebraic and geometric study of manifolds. Though defined in far more general circumstances, for our purposes factorization homology is a device which takes as input an $n$-manifold, $M$ as well as an $n$-disk algebra in spaces, $A$, and returns an space denoted $\int_M A$. An $n$-disk algebra can be thought of as an $E_n$-algebra with the extra data of an $O(n)$-action which is compatible with the action on $E_n$ given by rotating disks. Alternatively, one can think of $n$-disk algebras as algebras for the semidirect product $E_n \rtimes O(n)$ in the sense of \cite{Wahl}. One can then think of the factorization homology $\int_M A$ as a configuration space of points in $M$ with labels in $A$, in which points are allowed to collide and, correspondingly, their labels interact according to the multiplication of $A$. With this heuristic, and given the classical connections between labelled configuration spaces and mapping spaces, it seems natural that there might be a corresponding relationship between factorization homology and mapping spaces. This relation is given by nonabelian Poincar{\'e} duality. We remark that as it is not our intention in this note to give a rigorous exposition on factorization homology,  the reader is strongly encouraged to see any of \cite{AF, AFT, AFT2, Lurie, Klang, BK, Sal} for more background.

We now recall a special case of nonabelian Poincar{\'e} duality for smooth $n$-manifolds with boundary. Let $Z$ be an $n$-connective, pointed, topological space and $M$ be a smooth $n$-manifold, possibly with boundary. Let $E_Z \rightarrow BO(n)$ be a fibration with pointed fiber $Z$ and which admits a section. Denote by $E_Z^{TM}$ the bundle over $M$ obtained by pulling back $E_Z$ along the tangent classifier $M \rightarrow BO(n)$. Note that as the fiber of a bundle over $BO(n)$, the space $Z$ inherits an action of $O(n)$ which we will use to regard $\Omega^nZ$ as an $n$-disk algebra. 
\begin{thm}[Nonabelian Poincar{\'e} duality] \label{nonabe PD}
There is a natural equivalence $$\int_M \Omega^n Z \simeq \Gamma_{\mathrm{c}}(E^{TM}_Z).$$
\end{thm}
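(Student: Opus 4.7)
The plan is to appeal to the universal characterization of factorization homology from \cite{AF, AFT, Lurie}: the functor $M \mapsto \int_M A$ is the unique symmetric monoidal functor from the $\infty$-category $\mathrm{Mfld}_n$ of $n$-manifolds with embeddings to spaces which satisfies $\otimes$-excision and whose value on the framed disk $\mathbb{R}^n$ is the prescribed $n$-disk algebra $A$. To prove the theorem it therefore suffices to exhibit $M \mapsto \Gamma_{\mathrm{c}}(E_Z^{TM})$ as such a functor with value $\Omega^n Z$ on $\mathbb{R}^n$; naturality of the resulting equivalence is then automatic.

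The value on a single disk is essentially tautological: the tangent bundle of $\mathbb{R}^n$ is canonically trivial, so $E_Z^{T\mathbb{R}^n} \simeq \mathbb{R}^n \times Z$, and the prescribed section of $E_Z \to BO(n)$ furnishes a basepoint section with respect to which
$$\Gamma_{\mathrm{c}}(E_Z^{T\mathbb{R}^n}) \;\simeq\; \mathrm{Map}_*\bigl((\mathbb{R}^n)^+, Z\bigr) \;=\; \Omega^n Z,$$
and the residual $O(n)$-action endows this with precisely the $n$-disk algebra structure required by the theorem. Symmetric monoidality of $\Gamma_{\mathrm{c}}$ (sending $M \sqcup M'$ to a product of section spaces) is immediate from the local nature of the compact support condition.

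The main obstacle is verifying $\otimes$-excision: for a collar decomposition $M \cong M_- \cup_{N \times \mathbb{R}} M_+$, one must show that the natural restriction map
$$\Gamma_{\mathrm{c}}(E_Z^{TM_-}) \underset{\Gamma_{\mathrm{c}}(E_Z^{T(N \times \mathbb{R})})}{\otimes} \Gamma_{\mathrm{c}}(E_Z^{TM_+}) \;\longrightarrow\; \Gamma_{\mathrm{c}}(E_Z^{TM})$$
is an equivalence, where the left-hand side is the two-sided bar construction with respect to the $1$-disk algebra structure induced on the middle term by the $\mathbb{R}$-direction. The hypothesis that $Z$ be $n$-connective is indispensable here: it ensures that sections on the two pieces can be glued along the collar up to coherent homotopy without obstruction, and, equivalently, that the bar resolution converges to the actual section space rather than merely to its group completion. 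I would approach this by reducing, via a handle decomposition of $M$, to the case of a single handle attachment between two half-spaces, where the statement becomes a form of the Freudenthal--Blakers--Massey connectivity estimate applied to $\Omega^n Z$. Once these three ingredients are in place, the uniqueness clause of the characterization theorem delivers the desired natural equivalence $\int_M \Omega^n Z \simeq \Gamma_{\mathrm{c}}(E_Z^{TM})$.
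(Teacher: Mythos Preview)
The paper does not actually prove this theorem: immediately after the statement it writes ``For a proof of Theorem~\ref{nonabe PD} in greater generality, see [AFT17a -- 3.18],'' and then moves on. Nonabelian Poincar{\'e} duality is treated as a black-box input from the literature, not something established in this note.

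That said, your sketch is essentially the strategy of the cited reference: verify that $M \mapsto \Gamma_{\mathrm{c}}(E_Z^{TM})$ satisfies the axiomatic characterization of factorization homology (correct value on a disk, symmetric monoidality under disjoint union, and $\otimes$-excision along collar-gluings), and conclude by uniqueness. Two small caveats. First, the theorem as stated in the paper is for manifolds \emph{with boundary}, which requires working with $\mathscr{D}\mathrm{isk}_n^{\partial}$-algebras rather than plain $n$-disk algebras (see Remark~\ref{rek2}); your proposal silently restricts to the boundaryless case. Second, your outline of the excision step is a bit loose: the reduction to ``a single handle attachment between two half-spaces'' and an appeal to Blakers--Massey is morally right, but the actual argument in the cited sources proceeds more directly by showing that the bar construction over the collar recovers the global section space when the fiber is sufficiently connective, without an explicit handle induction. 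None of this is a genuine error, but if you were to write this up you would need to supply those details or, as the paper does, simply cite them.
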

For a proof of Theorem \ref{nonabe PD} in greater generality, see [AFT17a -- 3.18], and it may also be helpful to see [AFT17a -- 3.12, 3.16] for context.  The case we will consider is when $Z$ is equal to $\Sigma^nX$, with $X$ a connected, pointed space equipped with some $O(n)$-action. We will set $E_{\Sigma^nX}$ to be the Borel construction $EO(n) \times_{O(n)} \Sigma^nX,$ where we view $\Sigma^nX$ as an $O(n)$-space with the diagonal action, acting in the natural way on suspension coordinates and with the given action on $X$ coordinates, and the fibration $E_{\Sigma^nX} \rightarrow BO(n)$ is that which is induced by the fibration $EO(n) \rightarrow BO(n).$ Note that the section space $\Gamma_{\RM{c}}(E_{\Sigma^n X}^{TM})$ reduces to the section space of \cite{Bod, McDuff} when the $O(n)$-action on $X$ is trivial or $M$ is parallelizable.

\begin{rek} \label{rek2} Note that in order to make sense of the equivalence in Theorem \ref{nonabe PD}, one must interpret $\Omega^n Z$ as a $\mathscr{D}\mathrm{isk}_n^{\partial}$-algebra in the sense of [AFT17a -- 2.2, 2.8] so that this factorization homology is defined.  A $\mathscr{D}\mathrm{isk}_n^{\partial}$-algebra can be regarded as a triple $(A,B, \alpha)$ where $A$ is an $n$-disk algebra, $B$ an $(n-1)$-disk algebra, and $\alpha$ some kind of action of $A$ on $B$.  In the case where the $(n-1)$-disk algebra $B$ is trivial, this is simply the structure of an augmented $n$-disk algebra, so in particular, augmented $n$-disk algebras are examples of $\bdisks$-algebras. We will only consider the situation where $B$ is trivial and $A$ is an $n$-disk algebra of the form $\Omega^n Z$, which we will regard as an augmented $n$-disk algebra.
\end{rek}

\begin{rek} \label{parallel}
In the case where the manifold $M$ is framed, the section space of Theorem~\ref{nonabe PD} reduces to the mapping space $\mathrm{Map_c}(M;\Sigma^nX)$ because $E_{\Sigma^nX}$ is being pulled back via a null-homotopic map.\end{rek}

Our proof will also use a very small bit of the theory of zero-pointed manifolds developed in \cite{ZPM, PKD}.  Informally, a zero-pointed $n$-manifold is a pointed topological space that is an $n$-manifold away from the basepoint. The canonical example of a zero-pointed $n$-manifold is the quotient, $M/\partial M$, of an $n$-manifold by it's boundary, and in fact this is the only zero-pointed manifold we will ever consider. There is a notion of factorization homology of zero-pointed $n$-manifolds with coefficients in an augmented $n$-disk algebra,  discussed at length in section 3 of \cite{ZPM} as well as in \cite{PKD}. What is important to us is that for a manifold with boundary, $M$, the factorization homology with coefficients in an augmented $n$-disk algebra as defined in Section 2.2 of \cite{AFT} coincides with the factorization homology as defined in [AF14 -- 1.2.1] of the zero-pointed manifold $M/\partial M$ with coefficients in that same algebra. 

\begin{rek} We caution the reader that for a subspace $N \subset M$, we regard the quotient $M/N$ as the pushout of the diagram:
\[
\begin{tikzcd}
N \ar[hook]{r} \ar{d} &M \\
* &
\end{tikzcd}
\]
In particular, we adopt the convention that taking the quotient of a space by the empty set has the effect of adjoining a disjoint basepoint. 
\end{rek}

To conclude this section we will recall the definitions of the configuration spaces which are involved in Theorem~\ref{main}. 
\begin{defi}
Let $\mathrm{Conf}_i(M)$ denote the ordered configuration space of $i$ distinct points in $M$, topologized as a subspace of $M^i$. \end{defi} 
\noindent That is, $\mathrm{Conf}_i(M) \subset M^i$ is the subspace of those maps $\{1,...,i\}\rightarrow M$ which are injective.

\begin{defi} \label{rel conf}
Let $\mathrm{Conf}_i(M,\partial M)$ denote the quotient of $\mathrm{Conf_i}(M)$ by those configurations in which at least one point lies on the boundary of $M$.
\end{defi}

Because $\RM{Conf}_i(M)$ is an open submanifold of $M^i$, we can consider it's frame bundle which is an $O(ni)$-bundle canonically reducible to an $O(n)^i$-bundle. Denote it's reduction by $\RM{Fr}_{\RM{red}}(\RM{Conf}_i(M))$. We make the following definition, which is a special case of the definition of the frame bundle functor given in [AF14 -- 1.5.1]. 
\begin{defi} \label{framebdl} Define $\RM{Conf}_i^{\RM{fr}}(M, \partial M)$ to be the space obtained as the quotient of \\$\RM{Fr}_{\RM{red}}(\RM{Conf}_i(M))$ by it's restriction to the subspace of $\RM{Conf}_i(M)$ consisting of configurations such that at least one point lies in the boundary of $M$.
\end{defi}

\noindent With this definition $\RM{Conf}_i^{\RM{fr}}(M, \partial M)$ inherits the structure of an $O(n)^i$-space, equipped with a map to $\RM{Conf}_i(M, \partial M)$ which is a principal bundle away from the basepoint.  
\begin{rek} \label{frame}Note that the quotient $\RM{Conf}_i^{\RM{fr}}(M, \partial M) / O(n)^i$ is equivalent to $\RM{Conf}_i(M, \partial M)$ and this equivalence is $\Sigma_i$-equivariant with the evident $\Sigma_i$-actions.
\end{rek}

\section{May's approximation theorem and the Snaith splitting for $n$-disk algebras}

\vspace{3mm}
We will now give equivariant versions of May's Approximation Theorem and the Snaith splitting which we will use in our proof of Theorem~\ref{main}.

\begin{lem}[May's Approximation Theorem for augmented $n$-disk algebras] \label{Maylemma} For a connected, pointed topological space $X$, equipped with some $O(n)$-action, there is an equivalence of $n$-disk algebras:
$$\mathbb{F}^{\RM{aug}}_n(X) \xrightarrow{\simeq} \Omega^n\Sigma^nX$$
where $\BB{F}^{\RM{aug}}_n(X)$ denotes the free augmented $n$-disk algebra in pointed spaces on the $O(n)$-space $X$.
\end{lem}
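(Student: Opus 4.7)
The plan is to produce the map via the unit of the $(\Sigma^n, \Omega^n)$-adjunction on pointed $O(n)$-spaces and then reduce the fact that it is an equivalence to the classical (non-equivariant) May approximation theorem applied to the underlying pointed space of $X$.

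First, I would construct the comparison map. The loop space $\Omega^n\Sigma^nX$ is canonically an augmented $n$-disk algebra, with underlying $O(n)$-action given by the diagonal action on $\mathrm{Map}_*(S^n, \Sigma^nX)$ coming from simultaneous rotation of loops and the given action on $X$. The unit $\eta \colon X \to \Omega^n\Sigma^nX$ of the $(\Sigma^n, \Omega^n)$-adjunction is a pointed, $O(n)$-equivariant map, so by the universal property of the free augmented $n$-disk algebra functor it has a unique extension to a map of augmented $n$-disk algebras
\[
\phi \colon \mathbb{F}^{\mathrm{aug}}_n(X) \longrightarrow \Omega^n\Sigma^nX,
\]
which will be my candidate equivalence.

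Second, I would show $\phi$ is a weak equivalence by passing through the forgetful functor $U$ from $n$-disk algebras to $E_n$-algebras in pointed spaces, which is conservative on underlying spaces. The key identification is that $U\mathbb{F}^{\mathrm{aug}}_n(X)$ is the free augmented $E_n$-algebra on the pointed space $X$: both are represented by the same underlying configuration-space model $\bigvee_{k \geq 0} \mathrm{Conf}_k(\mathbb{R}^n)_+ \wedge_{\Sigma_k} X^{\wedge k}$, and $U$ simply discards the diagonal $O(n)$-action. Under this identification, $U(\phi)$ becomes the classical May comparison map, and the connectivity hypothesis on $X$ allows me to invoke the classical May approximation theorem to conclude that $U(\phi)$, and hence $\phi$, is an equivalence.

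The main obstacle I anticipate is precisely the identification in the second step: verifying that the underlying $E_n$-algebra of the free augmented $n$-disk algebra is genuinely the free augmented $E_n$-algebra on the pointed space $X$, and that $\phi$ truly restricts to the classical approximation map after forgetting the $O(n)$-structure. This is a naturality check between two free-forgetful adjunctions and the suspension-loop adjunction, but it does not require a new geometric input. Once it is in place, the statement reduces cleanly to the classical May theorem promoted along a conservative forgetful functor.
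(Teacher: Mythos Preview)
Your proposal is correct and follows essentially the same route as the paper: construct the comparison map from the $O(n)$-equivariance of the unit $X \to \Omega^n\Sigma^nX$ via the universal property of the free $n$-disk algebra, then deduce that it is an equivalence by forgetting to $E_n$-algebras and invoking May's classical theorem. The only cosmetic differences are that the paper first reduces the augmented statement to the non-augmented one and spells out the $O(n)$-equivariance of the unit by an explicit formula, whereas you invoke the $(\Sigma^n,\Omega^n)$-adjunction on pointed $O(n)$-spaces and work directly in the augmented setting; the identification $U\mathbb{F}_n^{\mathrm{aug}}(X)\simeq \mathbb{F}_{E_n}^{\mathrm{aug}}(X)$ you flag as the main check is exactly what the paper is implicitly using when it says ``this is precisely May's original theorem.''
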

\begin{proof} 
We will show there is an equivalence of $n$-disk algebras $\mathbb{F}_n(X) \xrightarrow{\simeq} \Omega^n\Sigma^nX$, where $\mathbb{F}_n(X)$ denotes the free $n$-disk algebra on $X$ without an augmentation. The augmented case then follows from the fact that any equivalence of $n$-disk algebras is an equivalence over the $n$-disk algebra given by a single point, and thus gives an equivalence of the corresponding augmented algebras.

We will use the universal property of free $n$-disk algebras to obtain an $n$-disk algebra map $\mathbb{F}_n(X) \rightarrow \Omega^n\Sigma^nX$. From here it will suffice to show the underlying map of $E_n$-algebras is an equivalence, but this is precisely May's original theorem \cite{May}. Recall that $\mathbb{F}_n(X)$ satisfies the universal property that for any $O(n)$ equivariant map $X \rightarrow A$, where $A$ is the underlying $O(n)$-space associated to an $n$-disk algebra, there exists a unique map of $n$-disk algebras $\BB{F}_n(X) \rightarrow A$ making the diagram below commute:
\[
\begin{tikzcd}
X \ar[hook]{d} \ar{r} & A \\
 \BB{F}_n(X) \ar[dashed]{ur}
\end{tikzcd}
\]

So all we need to see to finish our proof is that the natural map 
\begin{align*} X &\rightarrow \Omega^n\Sigma^nX 
\end{align*} is $O(n)$-equivariant. To see this, note the image under this map of an arbitrary element $x$ in $X$ is given by the pointed map 
\begin{align*}
S^n &\rightarrow S^n \wedge X \\
t &\mapsto [t,\RM{cst}_x(t)]
\end{align*} where $\RM{cst}_x$ denotes the constant map, $\RM{cst}_x(t)\equiv x$. 
Thus we simply need to see that for any element $g \in O(n)$, we have the equality $$[t,g \cdot x] =[g\cdot g^{-1}\cdot t,g\cdot \RM{cst}_x(g^{-1}\cdot t)].$$ Since we have the equality $$g\cdot \RM{cst_x}(g^{-1}\cdot t)=g\cdot x,$$ the claim immediately follows. 
\end{proof}

\noindent The reader may also see \cite{Wahl} for another proof of Lemma \ref{Maylemma} and generalization thereof to an equivariant version of May's Recognition Principle.

\begin{cor}[The Snaith splitting for augmented $n$-disk algebras] \label{Snaithcor} For a connected, pointed topological space $X$, equipped with some $O(n)$-action, there is an equivalence of augmented $n$-disk algebras
$$ \Sigma^{\infty}_+ \mathbb{F}^{\mathrm{aug}}_n(X) \simeq \mathbb{F}_n^{\RM{aug}}(\Sigma^{\infty}X)$$
where $\BB{F}_n^{\RM{aug}}(\Sigma^{\infty}X)$ denotes the free augmented $n$-disk algebra in spectra on $\Sigma^{\infty}X$.
\end{cor}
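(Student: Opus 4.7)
The plan is to produce a comparison map using the universal property of the free augmented $n$-disk algebra in spectra, and then to verify that it is an equivalence by combining Lemma~\ref{Maylemma} with the classical ($O(n)$-equivariantly refined) Snaith splitting.

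First, I would build the map. The unit inclusion $X \hookrightarrow \mathbb{F}_n^{\mathrm{aug}}(X)$ is a morphism of pointed $O(n)$-spaces, and since $\Sigma^{\infty}_{+}$ is a strong symmetric monoidal left adjoint from based spaces to spectra, it transports the augmented $n$-disk algebra structure on $\mathbb{F}_n^{\mathrm{aug}}(X)$ to an augmented $n$-disk algebra structure on $\Sigma^{\infty}_{+}\mathbb{F}_n^{\mathrm{aug}}(X)$; meanwhile the unit inclusion becomes an $O(n)$-equivariant map $\Sigma^{\infty}X \to \Sigma^{\infty}_{+}\mathbb{F}_n^{\mathrm{aug}}(X)$ of underlying spectra. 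The universal property of $\mathbb{F}_n^{\mathrm{aug}}(\Sigma^{\infty}X)$ as the free augmented $n$-disk algebra in spectra on the $O(n)$-spectrum $\Sigma^{\infty}X$ then produces a unique map of augmented $n$-disk algebras
$$\mathbb{F}_n^{\mathrm{aug}}(\Sigma^{\infty}X) \longrightarrow \Sigma^{\infty}_{+}\mathbb{F}_n^{\mathrm{aug}}(X).$$

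Next, I would argue this map is an equivalence on underlying spectra. By Lemma~\ref{Maylemma} we have an equivalence $\mathbb{F}_n^{\mathrm{aug}}(X) \simeq \Omega^n\Sigma^n X$ of $n$-disk algebras, so it is enough to identify $\Sigma^{\infty}_{+}\Omega^n\Sigma^n X$ with $\mathbb{F}_n^{\mathrm{aug}}(\Sigma^{\infty}X)$. The $O(n)$-equivariant form of the classical Snaith splitting exhibits
$$\Sigma^{\infty}_{+}\Omega^n\Sigma^n X \;\simeq\; \bigvee_{i \geq 0} \Sigma^{\infty}\mathrm{Conf}^{\mathrm{fr}}_i(\mathbb{R}^n)_{+} \underset{\Sigma_i \ltimes O(n)^i}{\wedge} X^{\wedge i},$$
and the same wedge is the explicit formula for the underlying spectrum of the free augmented $n$-disk algebra in spectra on $\Sigma^{\infty}X$, since the space of $n$-disk operations of arity $i$ is modelled by $\mathrm{Conf}^{\mathrm{fr}}_i(\mathbb{R}^n)$ with its $\Sigma_i \ltimes O(n)^i$-symmetry. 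Matching the two descriptions summand by summand yields the desired equivalence.

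The principal obstacle I anticipate is keeping careful track of the $O(n)$-equivariance throughout: one must invoke the genuinely equivariant refinement of Snaith's splitting rather than the underlying non-equivariant statement, and verify that the algebra map produced by the universal property respects the filtration by arity that underlies both wedge decompositions. A secondary, bookkeeping-level point is to reconcile the ``augmented'' unit summand ($i=0$, the sphere spectrum) on either side, which is precisely what the subscript ``$+$'' on $\Sigma^{\infty}_{+}$ records.
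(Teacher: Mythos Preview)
Your proposal is essentially correct and aligned with the paper's approach, which is a one-line appeal to Lemma~\ref{Maylemma} together with the classical argument of \cite{Coh80} deriving Snaith from May. Your construction of the comparison map via the universal property, using that $\Sigma^{\infty}_{+}$ is a symmetric monoidal left adjoint, is exactly the mechanism underlying that classical argument.

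One simplification: you flag as the ``principal obstacle'' the need for a genuinely $O(n)$-equivariant Snaith splitting, but this concern is misplaced. Once you have produced a map of augmented $n$-disk algebras via the universal property, all algebra and $O(n)$-structure is already built in; to check that this map is an equivalence you need only check the underlying map of spectra, for which the ordinary non-equivariant Snaith splitting (or even just the identification of the underlying spectrum of a free $E_n$-algebra) suffices. Indeed, this is the point of the argument style in \cite{Coh80}: a symmetric monoidal left adjoint carries free algebras to free algebras, so $\Sigma^{\infty}_{+}\mathbb{F}_n^{\mathrm{aug}}(X)\simeq\mathbb{F}_n^{\mathrm{aug}}(\Sigma^{\infty}X)$ follows directly without ever unpacking the wedge decomposition summand-by-summand. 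Your step of matching the two wedge descriptions is therefore redundant, though not wrong.
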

\begin{proof}
This follows from Lemma \ref{Maylemma} in exactly the same manner as the classical Snaith splitting follows from May's Approximation Theorem as proved, for example, in \cite{Coh80}.
\end{proof}

\section{The stable splitting of mapping spaces}

We are now ready to prove the main theorem: 
\begin{thm}[Stable splitting] For $M$ a smooth $n$-manifold and $X$ a connected, pointed topological space equipped with an action of $O(n),$ there is a stable splitting 
 $$\Sigma^{\infty} \Gamma_{\mathrm{c}}(E^{TM}_{\Sigma^nX}) \simeq \underset{i\geq 1}{\bigvee} \Sigma^{\infty} \mathrm{Conf}^{\RM{fr}}_i(M, \partial M) \underset{\Sigma_i \ltimes O(n)^i}{\wedge} X^{\wedge i}.$$
 When one takes the trivial $O(n)$-action, this reduces to the splitting 
  $$\Sigma^{\infty} \Gamma_{\mathrm{c}}(E^{TM}_{\Sigma^nX}) \simeq \underset{i\geq 1}{\bigvee} \Sigma^{\infty} \mathrm{Conf}_i(M, \partial M) \underset{\Sigma_i}{\wedge} X^{\wedge i},$$
and, in particular, when $M$ is parallelizable this gives the splitting 
$$\Sigma^{\infty} \mathrm{Map_c}(M, \Sigma^nX) \simeq \underset{i\geq 1}{\bigvee} \Sigma^{\infty} \mathrm{Conf}_i(M, \partial M) \underset{\Sigma_i}{\wedge} X^{\wedge i}.$$
\end{thm}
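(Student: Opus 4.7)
The plan is to follow the outline already sketched in the introduction: assemble a chain of three equivalences using nonabelian Poincar\'e duality, May's Approximation Theorem, and the Snaith splitting, and then finish by an explicit computation of the factorization homology of a free $\bdisks$-algebra in spectra.

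Concretely, I would begin by applying Theorem~\ref{nonabe PD} to identify $\Gamma_{\RM{c}}(E^{TM}_{\Sigma^n X})$ with the factorization homology $\int_M \Omega^n \Sigma^n X$, where $\Omega^n \Sigma^n X$ is regarded as an augmented $n$-disk algebra as in Remark~\ref{rek2}. Lemma~\ref{Maylemma} then lets me replace $\Omega^n \Sigma^n X$ by the free augmented $n$-disk algebra $\BB{F}_n^{\RM{aug}}(X)$ on the $O(n)$-space $X$. Next, I would apply $\Sigma^\infty$ and combine Corollary~\ref{Snaithcor} with the fact that $\Sigma^\infty$ preserves colimits (so commutes with factorization homology) to obtain
$$\Sigma^\infty \Gamma_{\RM{c}}(E^{TM}_{\Sigma^n X}) \simeq \Sigma^\infty \int_M \BB{F}_n^{\RM{aug}}(X) \simeq \int_M \BB{F}_n^{\RM{aug}}(\Sigma^\infty X),$$
reducing the theorem to computing the factorization homology of a free augmented $\bdisks$-algebra in spectra with generator $\Sigma^\infty X$.

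The final step, which I expect to be the main obstacle, is to show this factorization homology splits as the claimed wedge. Passing to the zero-pointed manifold $M/\partial M$, one can set up a hypercover whose pieces are disjoint unions of disks embedded in $M \setminus \partial M$; evaluating factorization homology on a free augmented algebra against this hypercover isolates $i$-fold smash powers of $\Sigma^\infty X$ indexed by framed configurations of $i$ points. The relative quotient of Definition~\ref{rel conf} absorbs the collapse along $\partial M$, while the reduced frame bundle of Definition~\ref{framebdl} records the $O(n)^i$-twisting that appears because the free algebra inputs are $O(n)$-equivariant; via Remark~\ref{frame}, the resulting $i$-th wedge summand becomes precisely $\Sigma^\infty \RM{Conf}_i^{\RM{fr}}(M, \partial M) \wedge_{\Sigma_i \ltimes O(n)^i} X^{\wedge i}$, with the $i = 0$ term killed by the augmentation. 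This is the hypercover computation of [AF15a~--~5.5] and [AF14~--~2.4.1]; the only genuinely new verification is that this argument goes through with spectrum-valued coefficients, but this is essentially formal because it only invokes colimits and symmetric monoidal structure, both of which behave well in spectra. Specialization to parallelizable $M$ or trivial $O(n)$-action on $X$ then yields the two stated consequences, since in those cases the frame-bundle twisting collapses (cf.\ Remark~\ref{parallel}).
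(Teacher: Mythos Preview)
Your proposal is correct and follows essentially the same route as the paper's proof: nonabelian Poincar\'e duality, then May's approximation, then suspend and apply the Snaith splitting, then the hypercover computation of factorization homology of a free algebra on the zero-pointed manifold $M/\partial M$, and finally specialize. The only minor imprecision is that the functor one pushes through factorization homology should be $\Sigma^\infty_+$ (a symmetric monoidal left adjoint, not merely colimit-preserving), after which one quotients by $S^0$ at the end to pass to $\Sigma^\infty$; otherwise your outline matches the paper's argument step for step.
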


\begin{proof}
 The statement of nonabelian Poincar{\'e} duality (Theorem \ref{nonabe PD}) gives the following equivalence:
$$\int_{M}\Omega^n \Sigma^nX \xrightarrow{\simeq} \Gamma_{\mathrm{c}}(E^{TM}_{\Sigma^nX}) $$
where $E^{TM}_{\Sigma^nX}$ denotes the bundle constructed in the paragraph following Theorem \ref{nonabe PD} and where we regard $\Omega^n \Sigma^n X$ as an augmented $n$-disk algebra as in Remark \ref{rek2}. Now Lemma \ref{Maylemma} gives an equivalence of augmented $n$-disk algebras between $\Omega^n \Sigma^n X$ and $\mathbb{F}_n^{\RM{aug}}(X)$, the free augmented $n$-disk algebra on the pointed space $X$ with the specified $O(n)$-action. It follows that there is an equivalence   
$$ \int_{M}  \mathbb{F}_n^{\RM{aug}}(X) \xrightarrow{\simeq} \Gamma_{\mathrm{c}}(E^{TM}_{\Sigma^nX}).$$
We next apply the suspension spectrum functor, $\Sigma^{\infty}_+,$ to both sides of this equivalence. Because $\Sigma_+^{\infty}$ is a symmetric monoidal left adjoint, it commutes with factorization homology, see [AF15a -- 3.25]. This leads to the equivalence
$$ \int_{M}  \Sigma^{\infty}_+ \mathbb{F}_n^{\RM{aug}}(X) \xrightarrow{\simeq} \Sigma^{\infty}_+ \Gamma_{\mathrm{c}}(E^{TM}_{\Sigma^nX}),$$
which, by Corollary \ref{Snaithcor}, reduces to the equivalence
$$ \int_{M}  \mathbb{F}_n^{\RM{aug}}(\Sigma^{\infty} X) \xrightarrow{\simeq} \Sigma^{\infty}_+\Gamma_{\mathrm{c}}(E^{TM}_{\Sigma^nX}).$$ 
As mentioned in Section \ref{prelims}, this factorization homology, as defined in \cite{AFT}, coincides with the reduced factorization homology of the zero-pointed manifold {$M_*:=M/\partial M$}, as defined in \cite{PKD}. One can compute this factorization homology using a hypercover argument and, in fact, we have the equivalence
$$
\int_{M_*}  \mathbb{F}_n^{\RM{aug}}(\Sigma^{\infty} X) \simeq \underset{i \geq 0}{\bigvee} \mathrm{Conf}^{\RM{fr}}_i(M, \partial M) \underset{\Sigma_i \ltimes O(n)^i}{\wedge} (\Sigma^{\infty}X)^{\wedge i}, $$
see [AF14 -- 2.4.1] for full details of this computation, and [AF15a -- 5.5] for the computation in the case where $X$ is given the trivial $O(n)$-action.
Finally, taking the quotient of both sides by $S^0$ gives the desired splitting:
\begin{align*} \Sigma^{\infty}\Gamma_{\mathrm{c}}(E^{TM}_{\Sigma^nX}) \simeq \underset{i \geq1}{\bigvee} \mathrm{Conf}^{\RM{fr}}_i(M, \partial M) \underset{\Sigma_i \ltimes  O(n)^i}{\wedge} (\Sigma^{\infty} X)^{\wedge i} 
& \simeq \underset{i \geq 1}{\bigvee} \Sigma^{\infty} \mathrm{Conf}^{\RM{fr}}_i(M, \partial M) \underset{\Sigma_i \ltimes O(n)^i}{\wedge} X^{\wedge i}.
\end{align*}
The splitting of the section space $\Gamma_{\RM{c}}(E_{\Sigma^nX}^{TM})$ when $X$ has the trivial $O(n)$-action follows from Remark \ref{frame} and the splitting of the mapping space $\RM{Map_c}(M,\Sigma^nX)$ for a parallelizable $M$ follows from Remark \ref{parallel}.
\end{proof}

\vspace{3mm}

\end{document}